\newtheorem{thm}{Theorem}
\newtheorem{defn}{Definition}
\newtheorem{lemma}{Lemma}
\newtheorem{pro}{Proposition}
\numberwithin{equation}{section} \setcounter{tocdepth}{1}
\begin{document}

\vspace{0.5in}
\renewcommand{\bf}{\bfseries}
\renewcommand{\sc}{\scshape}
\vspace{0.5in}

\title[Neimark-Sacker bifurcation and stability]{Neimark-Sacker bifurcation and stability analysis in a discrete phytoplankton-zooplankton system with Holling type II functional response}

\author{S.K. Shoyimardonov}

\address{V.I.Romanovskiy Institute of Mathematics,
Tashkent, Uzbekistan.}
\email{shoyimardonov@inbox.ru}





\keywords{dynamics, fixed point, ocean ecosystem, plankton, phytoplankton, zooplankton, discrete-time, Holling type II, bifurcation, Neimark-Sacker}

\subjclass[2010]{37C25, 39A28}

\begin{abstract} In this paper, we study discrete-time model of phytoplankton-zooplankton with Holling type II predator functional response. It is shown that Neimark-Sacker bifurcation occurs at the one of positive fixed points for certain parameter chosen as a bifurcation parameter.  The existence and local stability of the positive fixed points of the model are proved. By considering theoretical results in the concrete example,   it was obtained interesting dynamics of this system, which is not investigated in its corresponding continuous system.
\end{abstract}

\maketitle

\section{Introduction}

Investigation of ocean ecosystem is important in nature and it is actual research area in the theory of dynamic systems. Marine ecosystem models can illustrate the interaction between essential organisms and elements such as phytoplankton, zooplankton, mixoplankton, carbon, bacteria etc. Various models were studied by many researchers and obtained interesting results (\cite{Hong}, \cite{Qiu},  \cite{RSH}, \cite{RSHV}, \cite{Sajib}, \cite{Chen}, \cite{Tian}). Plankton serve as the basis for the aquatic food chain and they play an important role in ocean ecosystems. Basically, the interaction between two forms of plankton, plant-plankton known as phytoplankton and animal-plankton known as zooplankton, is widely studied. Phytoplankton mainly consist of unicellular photosynthetic organisms absorbing mineral elements (nitrogen, phosphorus, calcium, iron) and transform these elements into toxin (organic matters). Phytoplankton contributes about half of the photosynthesis on the planet and absorbs one-third of the carbon dioxide.  Zooplankton feed on toxin, phytoplankton and they are key of the marine food. Therefore, it is important to study the process of interaction between phytoplankton and zooplankton.

In \cite{Chatt} the following continuous-time phytoplankton-zooplankton model is considered:

\begin{equation}\label{chat}
\begin{cases}
\frac{dP}{dt}=bP(1-\frac{P}{k})-\alpha f(P)Z,\\
\frac{dZ}{dt}=\beta f(P)Z-rZ-\theta g(P)Z,
\end{cases}
\end{equation}
where $P$ is the density of phytoplankton and $Z$ is the density of the zooplankton population; $\alpha> 0$ and $\beta> 0$
are predation and conversion rates of the zooplankton on the phytoplankton population, respectively; $b > 0$ is the growth rate, $k > 0$ is carrying capacity of the phytoplankton; $r > 0$ is the death rate of the zooplankton; $f(P)$ represents the predator functional response; $g(P)$ represents the distribution of the toxin substances; $\theta> 0$ denotes the rate of toxin liberation by the phytoplankton population. Authors of \cite{Chatt} analyzed the local stability of the model (\ref{chat}) with different kinds of $f(u)$ and $g(u)$.

In \cite{Chen}, authors investigated the model (\ref{chat}) in continuous-time by choosing $f(u)=\frac{u^h}{1+cu^h}$ (for $h = 1, 2$), $g(u) = u$ and denoting
$$\overline{t}=bt, \, \overline{u}=\frac{P}{k}, \, \overline{v}=\frac{\alpha k^{h-1}Z}{b}, \, \overline{c}={ck^h}, \, \overline{\beta}=\frac{\beta k^h}{b}, \, \overline{r}=\frac{r}{b}, \, \overline{\theta}=\frac{\theta k}{b}.$$

Then by dropping the overline sign at time $t\geq0$ we get: 

\begin{equation}\label{chenn}
\begin{cases}
\frac{du}{dt}=u(1-u)-\frac{u^hv}{1+cu^h}\\
\frac{dv}{dt}=\frac{\beta u^hv}{1+cu^h}-rv-\theta uv.\\
\end{cases}
\end{equation}

Notice that, for $h = 1, f(u)$ denotes the Holling type II predator functional response, and
for $h = 2, f(u)$ denotes the Holling type III predator functional response. In the case $\theta=0$ the global dynamics of the system (\ref{chenn})  is well studied by many mathematicians (\cite{Chen2}, \cite{Cheng2}, \cite{Hsu1}, \cite{Hsu2}, \cite{Hsu3}, \cite{Ko}, \cite{Peng}, \cite{Wang}, \cite{Zhou}). For $\theta>0$ authors (in \cite{Chen}) investigated the effect of the toxin substances and showed the occurrence of global stable and bistable phenomenons for the model (\ref{chenn}).

At time moment $t\geq 0,$ consider the model (\ref{chenn}) for $h=1$ :

\begin{equation}\label{1}
\begin{cases}
\frac{du}{dt}=u(1-u)-\frac{uv}{1+cu}\\
\frac{dv}{dt}=\frac{\beta uv}{1+cu}-rv-\theta uv,
\end{cases}
\end{equation}
where $\beta, r, \theta, c$ are positive parameters.

Let's consider discrete-time version of the model (\ref{1}), which has the following form
\begin{equation}\label{discr}
V:
\begin{cases}
u^{(1)}=u(2-u)-\frac{uv}{1+cu}\\[2mm]
v^{(1)}=\frac{\beta uv}{1+cu}+(1-r)v-\theta uv.
\end{cases}
\end{equation}
where $(u,v)\in R^2_+=\{(x,y)\in R^2:x\geq0, y\geq0\}.$

In this paper, we investigate existence and local stability of fixed points  and occurrence of Neimark-Sacker bifurcation at a positive fixed point. The paper organized as following: In the Section 2, we find conditions to parameters for existence of positive fixed points and  analyse local stability of them. In the Section 3, sufficient conditions for the occurrence of the Neimark-Sakker bifurcation are obtained. In the Section 4, we consider the concrete example with numerical simulations which illustrate our theoretical results. In the last Section we give a discussion.

\section{Fixed Points}

Recall that the fixed point  $p$ for a mapping $F: R^{m}\rightarrow R^{m}$ is a solution to the equation $F(p)=p$.
 In this section, we find conditions for parameters to be exist fixed points of the operator $\ref{discr}$ with positive coordinates  and investigate their local stability using the known lemma.   To find fixed points of the operator (\ref{discr}) we have to solve the following system:

\begin{equation}\label{fpsys}
\begin{cases}
u(2-u)-\frac{uv}{1+cu}=u\\[2mm]
\frac{\beta uv}{1+cu}+(1-r)v-\theta uv=v
\end{cases}
\end{equation}

Obviously, $E_0=(0;0)$ and $E_1=(1,0)$ are fixed points of $V.$ The case $u>0, v>0$ will be studied below (see Section 2.1).

\begin{defn}\label{def1} Let $E(x,y)$ be a fixed point of the operator $F:\mathbb{R}^{2}\rightarrow\mathbb{R}^{2}$ and $\lambda_1, \lambda_2$ are eigenvalues of the Jacobian matrix $J=J_{F}$ at the point $E(x,y).$

(i) If $|\lambda_1|<1$ and $|\lambda_2|<1$ then the fixed point $E(x,y)$ is called an \textbf{attractive} or \textbf{sink};

(ii) If $|\lambda_1|>1$ and $|\lambda_2|>1$ then the fixed point $E(x,y)$ is called  \textbf{repelling} or \textbf{source};

(iii) If $|\lambda_1|<1$ and $|\lambda_2|>1$ (or $|\lambda_1|>1$ and $|\lambda_2|<1$) then the fixed point $E(x,y)$ is called  \textbf{saddle};

(iv) If either $|\lambda_1|=1$ or $|\lambda_2|=1$ then the fixed point $E(x,y)$ is called to be  \textbf{non-hyperbolic};
\end{defn}

\begin{pro} The following statements hold true:
$$E_{0}=\left\{\begin{array}{lll}
{\rm saddle}, \ \ \ \ {\rm if} \ \  0<r<2\\[2mm]
{\rm nonhyperbolic}, \ \ {\rm if} \ \  r=2\\[2mm]
{\rm repelling}, \ \ \ \  {\rm if}  \ \ r>2,
\end{array}\right.$$
$$E_{1}=\left\{\begin{array}{lll}
{\rm attractive}, \ \ \ \ {\rm if} \ \  \frac{\beta}{1+c}<r+\theta<2+\frac{\beta}{1+c}\\[2mm]
{\rm nonhyperbolic}, \ \ {\rm if} \ \  r+\theta=\frac{\beta}{1+c} \ \ {\rm or} \ \ r+\theta=2+\frac{\beta}{1+c}  \\[2mm]
{\rm saddle}, \ \ \ \  {\rm if}  \ \  {\rm otherwise}
\end{array}\right.$$
\end{pro}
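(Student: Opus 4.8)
The plan is to compute the Jacobian matrix $J_V(u,v)$ of the map $V$ from (\ref{discr}) at each of the two boundary fixed points and read off the eigenvalues, then apply Definition \ref{def1}. Differentiating, the Jacobian is
\[
J_V(u,v)=\begin{pmatrix} 2-2u-\dfrac{v}{(1+cu)^2} & -\dfrac{u}{1+cu} \\[3mm] \dfrac{\beta v}{(1+cu)^2}-\theta v & \dfrac{\beta u}{1+cu}+(1-r)-\theta u \end{pmatrix}.
\]
At $E_0=(0,0)$ this is diagonal, $J_V(0,0)=\mathrm{diag}(2,\,1-r)$, so the eigenvalues are $\lambda_1=2$ and $\lambda_2=1-r$. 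Since $|\lambda_1|=2>1$ always, the classification of $E_0$ depends only on $|\lambda_2|=|1-r|$: we have $|1-r|<1\iff 0<r<2$ (giving a saddle), $|1-r|=1\iff r=2$ (nonhyperbolic, discarding $r=0$ which is excluded), and $|1-r|>1\iff r>2$ (source/repelling). This reproduces the first table exactly.

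For $E_1=(1,0)$, again the off-diagonal entry in the second row vanishes because $v=0$, so $J_V(1,0)$ is upper triangular with diagonal entries $\lambda_1=2-2\cdot 1-0=0$ and $\lambda_2=\dfrac{\beta}{1+c}+(1-r)-\theta = 1-\bigl(r+\theta-\tfrac{\beta}{1+c}\bigr)$. Since $|\lambda_1|=0<1$ always, the type of $E_1$ is governed by $|\lambda_2|$. Writing $s:=r+\theta-\tfrac{\beta}{1+c}$, we have $\lambda_2=1-s$, and $|1-s|<1\iff 0<s<2\iff \tfrac{\beta}{1+c}<r+\theta<2+\tfrac{\beta}{1+c}$ (attractive); $|1-s|=1\iff s=0$ or $s=2$, i.e. $r+\theta=\tfrac{\beta}{1+c}$ or $r+\theta=2+\tfrac{\beta}{1+c}$ (nonhyperbolic); and otherwise $|1-s|>1$, giving $|\lambda_2|>1$ while $|\lambda_1|<1$, hence a saddle. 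This matches the second table.

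There is essentially no hard step here — the only thing to be careful about is that at both boundary points one column (or row) of the Jacobian is automatically triangular because $v=0$, so the eigenvalues are just the diagonal entries and no characteristic polynomial needs to be solved. The routine part is the correct differentiation of the terms $\tfrac{uv}{1+cu}$ and $\tfrac{\beta uv}{1+cu}$; after substituting $v=0$ their $v$-derivatives survive but their $u$-derivatives drop out, which is what makes the matrices triangular. One should also note explicitly that the "repelling" case for $E_1$ (which would need $|\lambda_1|>1$) never occurs since $\lambda_1=0$, so only saddle and the listed cases appear, consistent with the statement's "saddle, if otherwise."
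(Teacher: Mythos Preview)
Your proof is correct and follows essentially the same approach as the paper: compute the Jacobian, observe that at both $E_0$ and $E_1$ it is (upper) triangular because $v=0$, read off the eigenvalues $\lambda_1=2,\ \lambda_2=1-r$ at $E_0$ and $\lambda_1=0,\ \lambda_2=\frac{\beta}{1+c}+1-r-\theta$ at $E_1$, and classify via $|\lambda_2|\lessgtr 1$. Your write-up is slightly more detailed (the substitution $s=r+\theta-\tfrac{\beta}{1+c}$ and the explicit remark that $E_1$ can never be repelling since $\lambda_1=0$), but the argument is the same.
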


\begin{proof} The Jacobian of the operator $V$ is
\begin{equation}\label{jac}
J(u,v)=\begin{bmatrix}
2-2u-\frac{v}{(1+cu)^2} & -\frac{u}{1+cu}\\
\frac{\beta v}{(1+cu)^2}-\theta v & \frac{\beta u}{1+cu}+1-r-\theta u
\end{bmatrix}
\end{equation}
Then $J(0,0)=\begin{bmatrix}
2 & 0\\
0 & 1-r
\end{bmatrix}$  and eigenvalues are $2$ and $1-r.$ From this, for $E_0$ we can take the proof easily. Similarly, $J(1,0)=\begin{bmatrix}
0 & -\frac{1}{1+c}\\
0 & \frac{\beta}{1+c}+1-r-\theta
\end{bmatrix}$ and the eigenvalues are $\lambda_1=0, \lambda_2=\frac{\beta}{1+c}+1-r-\theta.$ By solving $|\lambda_2|<1$ we get the condition $\frac{\beta}{1+c}<r+\theta<2+\frac{\beta}{1+c}.$ Thus, the proposition is proved.
\end{proof}
\subsection{Existence of positive fixed points}
From the system (\ref{fpsys}) we  get
\begin{equation}
\begin{cases}\label{sys1}
u+\frac{v}{1+cu}=1\\[2mm]
\frac{\beta u}{1+cu}-r-\theta u=0.
\end{cases}
\end{equation}

\begin{pro}\label{prop2} The following statements hold true:

(i) If $r+\theta<\beta\leq \frac{(r+\theta)^2}{\theta}$ and $0<c<\frac{\beta-r-\theta}{r+\theta}$ then there exists unique positive fixed point $E_2=(u^*,v^*),$ (i.e., solution of (\ref{sys1})), 

(ii) If $\beta>\frac{(r+\theta)^2}{\theta}$ and $0<c<\frac{\beta-r-\theta}{r+\theta}$ then there exists unique positive fixed point $E_2=(u^*,v^*),$

(iii) If $\beta>\frac{(r+\theta)^2}{\theta}$ and $\frac{\beta-r-\theta}{r+\theta}<c<\frac{\beta+\theta-2\sqrt{\beta\theta}}{r}$ then there exist two positive fixed points $E_2=(u^*,v^*)$ and $E_3=(u^{**},v^{**});$

(iv) If $\beta>\frac{(r+\theta)^2}{\theta}$ and $c=\frac{\beta+\theta-2\sqrt{\beta\theta}}{r}$ then there exists unique positive fixed point
$E_4=(\overline{u},\overline{v}),$
where
$$u^*=\frac{\beta-rc-\theta-\sqrt{(\beta-rc-\theta)^2-4cr\theta}}{2c\theta}, \ \ v^*=(1-u^*)(1+cu^*), $$ $$u^{**}=\frac{\beta-rc-\theta+\sqrt{(\beta-rc-\theta)^2-4cr\theta}}{2c\theta}, \ \ v^{**}=(1-u^{**})(1+cu^{**}),$$  $$\overline{u}=\frac{r}{\sqrt{\theta}(\sqrt{\beta}-\sqrt{\theta})}, \ \ \overline{v}=(1-\overline{u})(1+c\overline{u}).$$
\end{pro}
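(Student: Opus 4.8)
The plan is to analyze the second equation of the system (\ref{sys1}), namely $\frac{\beta u}{1+cu}-r-\theta u=0$, since this single equation in $u$ alone determines the positive fixed points; once a root $u\in(0,1)$ is found, the corresponding $v$ is recovered uniquely from the first equation as $v=(1-u)(1+cu)$, which is positive precisely when $u<1$. Clearing denominators, the second equation becomes the quadratic
\begin{equation}\label{quad}
c\theta u^2-(\beta-rc-\theta)u+r=0,
\end{equation}
so the whole proposition reduces to counting the roots of (\ref{quad}) that lie in the open interval $(0,1)$, under the various parameter regimes. First I would record that since $c,\theta,r>0$, the product of the roots is $r/(c\theta)>0$ and their sum is $(\beta-rc-\theta)/(c\theta)$; hence both roots are real and positive iff the discriminant $\Delta=(\beta-rc-\theta)^2-4cr\theta\ge 0$ and $\beta-rc-\theta>0$, i.e. $c<\frac{\beta-r-\theta}{r+\theta}$ would be one of the governing inequalities (this rearranges $\beta-rc-\theta>r$, equivalently... let me keep the bookkeeping for the writeup). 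The discriminant condition $\Delta\ge0$ factors as a quadratic in $\sqrt{c}$ or can be written $(\beta-\theta)^2-2c r(\beta+\theta)+r^2c^2\ge0$, whose roots in $c$ are $c_\pm=\frac{\beta+\theta\pm2\sqrt{\beta\theta}}{r}=\frac{(\sqrt\beta\pm\sqrt\theta)^2}{r}$; the relevant threshold for the two-root case is $c=c_-=\frac{\beta+\theta-2\sqrt{\beta\theta}}{r}$, matching the statement.

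The core of the argument is then a case analysis keyed to the position of $1$ relative to the roots, which is cleanest via the sign of the quadratic $q(u):=c\theta u^2-(\beta-rc-\theta)u+r$ at the endpoints $u=0$ and $u=1$. We have $q(0)=r>0$ always, and $q(1)=c\theta-(\beta-rc-\theta)+r=c\theta-\beta+rc+\theta+r=(r+\theta)(1+c)-\beta$. Thus $q(1)<0 \iff \beta>(r+\theta)(1+c) \iff c<\frac{\beta-r-\theta}{r+\theta}$. When $q(1)<0$, since $q(0)>0$ and $q\to+\infty$, there is exactly one root in $(0,1)$ and one root in $(1,\infty)$ — this yields the unique positive fixed point $E_2$ and accounts for parts (i) and (ii): in (i) the extra hypothesis $\beta\le\frac{(r+\theta)^2}{\theta}$ forces $c_-\ge\frac{\beta-r-\theta}{r+\theta}$ (one should check $\frac{(\sqrt\beta-\sqrt\theta)^2}{r}\ge\frac{\beta-r-\theta}{r+\theta}$ reduces, after clearing denominators, to a true inequality equivalent to $\beta\le(r+\theta)^2/\theta$), so the range $c<\frac{\beta-r-\theta}{r+\theta}$ is automatically inside the region where $\Delta$ need not be analyzed; in (ii) the same conclusion holds directly. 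When $q(1)>0$, i.e. $c>\frac{\beta-r-\theta}{r+\theta}$, both roots (if real) lie on the same side of $1$; since their product is $r/(c\theta)$ and their sum is $(\beta-rc-\theta)/(c\theta)$, one checks the vertex abscissa $\frac{\beta-rc-\theta}{2c\theta}$ is less than $1$ under $\beta<(r+\theta)(1+c)$... so both roots, when real, lie in $(0,1)$. Realness requires $\Delta\ge0$, i.e. $c\le c_-$; combined with $c>\frac{\beta-r-\theta}{r+\theta}$ this gives part (iii) (strict inequality, two distinct roots $E_2,E_3$) and the boundary case $c=c_-$ gives the double root $E_4$ with $\overline u=\frac{\beta-rc-\theta}{2c\theta}$, which simplifies to $\frac{r}{\sqrt\theta(\sqrt\beta-\sqrt\theta)}$ upon substituting $c=\frac{(\sqrt\beta-\sqrt\theta)^2}{r}$.

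Finally I would verify in each case that the roots actually lie strictly below $1$ (not just that an odd/even number of roots is in $(0,1)$): in the $q(1)<0$ case the intermediate value theorem gives a root in $(0,1)$ outright; in the $q(1)>0$, $\Delta\ge0$ case one needs both $q(0)>0$, $q(1)>0$, the vertex in $(0,1)$, and $\Delta\ge0$ — these four together force both roots into $(0,1)$. The vertex condition $0<\frac{\beta-rc-\theta}{2c\theta}<1$ needs $\beta-rc-\theta>0$ (which, interestingly, is implied by $\Delta>0$ together with $q(1)>0$ and $q(0)>0$, since a downward-opening-vertex-below-axis scenario is impossible here as the leading coefficient $c\theta>0$) and $\beta-rc-\theta<2c\theta$, the latter being exactly $q(1)>0$ rearranged — wait, $q(1)=c\theta-(\beta-rc-\theta)+r>0$ gives $\beta-rc-\theta<c\theta+r$, which is slightly weaker; the clean way is to note $q$ is a upward parabola with $q(0),q(1)>0$, so it has two roots in $(0,1)$ iff its minimum is negative and the minimizer is in $(0,1)$, i.e. iff $\Delta>0$ and $0<\text{vertex}<1$, and I'd check the vertex bound directly from the parameter inequalities. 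Throughout, the recovered $v=(1-u)(1+cu)$ is automatically positive since each root $u\in(0,1)$. The main obstacle I anticipate is purely the algebraic bookkeeping — showing that the stated threshold $\frac{(r+\theta)^2}{\theta}$ is exactly the value of $\beta$ at which $c_-=\frac{\beta-r-\theta}{r+\theta}$, and confirming the vertex lies in $(0,1)$ in cases (iii)–(iv) — rather than any conceptual difficulty; everything else is the standard "count roots of a quadratic in an interval via sign of the quadratic at the endpoints plus discriminant plus vertex location" routine.
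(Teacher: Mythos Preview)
Your approach is correct and follows the same overall reduction as the paper --- both isolate the quadratic $c\theta u^{2}-(\beta-rc-\theta)u+r=0$ and count its roots in $(0,1)$ --- but the executions differ. The paper works directly with the explicit root formulas $u_{1,2}=\frac{\beta-rc-\theta\mp\sqrt{\Delta}}{2c\theta}$ and checks $u_i<1$ by manipulating radical inequalities (squaring expressions like $\sqrt{\Delta}<2c\theta+rc+\theta-\beta$), whereas you use the textbook ``sign of $q$ at the endpoints, vertex location, discriminant'' method. Your route is cleaner: the single computation $q(1)=(r+\theta)(1+c)-\beta$ immediately explains why $c=\frac{\beta-r-\theta}{r+\theta}$ is the threshold separating one root in $(0,1)$ from two, with no radicals to square; the paper arrives at the same threshold only after a page of inequality-chasing.

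Two small corrections to your parentheticals. First, the claim that the hypothesis $\beta\le\frac{(r+\theta)^2}{\theta}$ in (i) is what forces $c_{-}\ge\frac{\beta-r-\theta}{r+\theta}$ is wrong: the inequality $\frac{(\sqrt\beta-\sqrt\theta)^2}{r}\ge\frac{\beta-r-\theta}{r+\theta}$ reduces to $(r+\theta-\sqrt{\beta\theta})^2\ge 0$, which holds for \emph{all} parameter values (with equality exactly at $\beta=\frac{(r+\theta)^2}{\theta}$). The genuine role of the threshold $\frac{(r+\theta)^2}{\theta}$ in your framework is that it is equivalent to $\frac{\beta-\theta}{r+2\theta}<\frac{\beta-r-\theta}{r+\theta}$, i.e.\ it is precisely what guarantees the vertex abscissa is $<1$ once $c>\frac{\beta-r-\theta}{r+\theta}$ --- this is the check you flagged as needing to be done for (iii)--(iv), and it goes through. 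Second, your early line ``$\beta-rc-\theta>0$, i.e.\ $c<\frac{\beta-r-\theta}{r+\theta}$'' is a slip (the correct rearrangement is $c<\frac{\beta-\theta}{r}$); you sensed this yourself. Neither slip affects the structure of the argument.
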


\begin{proof} First, we have to solve the equation with respect to $u:$
$$c\theta u^2-(\beta-rc-\theta)u+r=0,$$
its discriminant $D=(\beta-rc-\theta)^2-4cr\theta$ is  positive iff
\begin{equation} \label{conc1}
c<\frac{\beta+\theta-2\sqrt{\beta\theta}}{r}.
 \end{equation}
 Then, the roots of (\ref{fpeq}) are

 $$u_{1}=\frac{\beta-rc-\theta-\sqrt{(\beta-rc-\theta)^2-4cr\theta}}{2c\theta}, \ \ u_{2}=\frac{\beta-rc-\theta+\sqrt{(\beta-rc-\theta)^2-4cr\theta}}{2c\theta}.$$
 Moreover, if $\beta>\theta$ then under condition (\ref{conc1}), it follows that  $\beta-rc-\theta>0.$ If we assume $\beta-rc-\theta<0,$ i.e., $\beta-\theta<rc<\beta+\theta-2\sqrt{\beta\theta},$ then $2\theta-2\sqrt{\beta\theta}>0$ which contradicts to condition $\beta>\theta.$ Since, $\sqrt{D}<\beta-rc-\theta$ it follows the positiveness of different $u_1, u_2$ with conditions $\beta>\theta$ and   $c<\frac{\beta+\theta-2\sqrt{\beta\theta}}{r}.$

 If $c=\frac{\beta+\theta-2\sqrt{\beta\theta}}{r}$ then $D=0$ and $u_1=u_2=\overline{u}=\frac{r}{\sqrt{\theta}(\sqrt{\beta}-\sqrt{\theta})}$ which is positive if $\beta>\theta.$ But, from the system (\ref{sys1}) we have $v=(1-u)(1+cu)$ and for positiveness of $v$ we have to check the condition $\overline{u}<1,$ i.e., $ \frac{r}{\sqrt{\theta}(\sqrt{\beta}-\sqrt{\theta})}<1$ which gives more stronger condition $\beta>\frac{(r+\theta)^2}{\theta}$ than $\beta>\theta.$ Hence, we proved assertion (iv) of the proposition.

  Let condition (\ref{conc1}) is satisfied and $\beta>\theta.$ Then $u_1>0$, $u_2>0$ and in the next steps we have to find conditions for positiveness of $v.$  Let us consider bigger root $u_2$ with condition $u_2<1.$ Then

  $$\frac{\beta-rc-\theta+\sqrt{(\beta-rc-\theta)^2-4cr\theta}}{2c\theta}<1 \Rightarrow \sqrt{(\beta-rc-\theta)^2-4cr\theta}<2c\theta+rc+\theta-\beta $$ \\
  If $2c\theta+rc+\theta-\beta>0$ or
  \begin{equation} \label{conc2}
  c>\frac{\beta-\theta}{r+2\theta}
\end{equation}
then from $\sqrt{(\beta-rc-\theta)^2-4cr\theta}<2c\theta+rc+\theta-\beta$ we get the  condition
  \begin{equation} \label{conc3}
  c>\frac{\beta-r-\theta}{r+\theta}.
\end{equation}
By comparing (\ref{conc2}) and (\ref{conc3}) we have that if $\beta<\frac{(r+\theta)^2}{\theta}$ then
$$\frac{\beta-\theta}{r+2\theta}>\frac{\beta-r-\theta}{r+\theta}.$$
On the other hand the condition (\ref{conc1}) must be satisfied, i.e.,
$$\frac{\beta-\theta}{r+2\theta}<\frac{\beta+\theta-2\sqrt{\beta\theta}}{r}.$$
Simplifying this inequality, we get $\sqrt{\theta}(\sqrt{\theta}-\sqrt{\beta})(\theta+r-\sqrt{\beta\theta})>0.$ Since, $\beta>\theta$ we have $\theta+r-\sqrt{\beta\theta}<0,$ i.e.,  $\beta>\frac{(r+\theta)^2}{\theta}.$ This contradiction supports that the condition $u_2<1$ can be satisfied if  $\beta>\frac{(r+\theta)^2}{\theta}$ or in the case $\frac{\beta-\theta}{r+2\theta}<\frac{\beta-r-\theta}{r+\theta}.$ Let $\beta>\frac{(r+\theta)^2}{\theta}$ and $c>\frac{\beta-r-\theta}{r+\theta}.$ If we show that both conditions (\ref{conc1}) and (\ref{conc3}) are satisfied then it follows that $u_2<1$ so $u_1<1$ and there exist two different positive fixed points.  Let us check the inequality  $$\frac{\beta-r-\theta}{r+\theta}<\frac{\beta+\theta-2\sqrt{\beta\theta}}{r}.$$
From this we get $(r+\theta-\sqrt{\beta\theta})^2>0$ which is always true except $\beta=\frac{(r+\theta)^2}{\theta}.$ If $\beta=\frac{(r+\theta)^2}{\theta}$ then $\frac{\beta-r-\theta}{r+\theta}=\frac{\beta+\theta-2\sqrt{\beta\theta}}{r}$ and from condition (\ref{conc1}) one has $c<\frac{\beta-r-\theta}{r+\theta},$ i.e., $u_2\geq1.$ Thus, we can finish the proof of assertion (iii).

In the last step we assume that
\begin{equation} \label{conc4}
  \beta>r+\theta, \ \ c<\frac{\beta-r-\theta}{r+\theta}.
\end{equation}
Obviously, in this case $u_2>1$, it is easily checked that $u_1<1.$ So, there exists unique positive fixed point $(u_1,v_1)$ which gives us the proof of assertions (i) and (ii).
Note that, if $c=\frac{\beta-r-\theta}{r+\theta}$ then $u_1=u_2=1$ and $v_1=v_2=0.$ Consequently, the proof is completed.

\end{proof}

\subsection{Stability analysis of positive fixed points}
Before analyze the fixed points we give the following useful lemma (\cite{Cheng}).
\begin{lemma}\label{lem1} Let $F(\lambda)=\lambda^2+B\lambda+C,$ where $B$ and $C$ are two real constants. Suppose $\lambda_1$ and $\lambda_2$ are two roots of $F(\lambda)=0.$ Then the following statements hold.
\begin{itemize}
 \item[] (i) If $F(1)>0$ then

(i.1) $|\lambda_1|<1$ and $|\lambda_2|<1$ if and only if $F(-1)>0$ and $C<1;$

(i.2) $\lambda_1=-1$ and $\lambda_2\neq-1$ if and only if $F(-1)=0$ and $B\neq2;$

(i.3) $|\lambda_1|<1$ and $|\lambda_2|>1$ if and only if $F(-1)<0;$

(i.4) $|\lambda_1|>1$ and $|\lambda_2|>1$ if and only if $F(-1)>0$ and $C>1;$

(i.5) $\lambda_1$ and $\lambda_2$ are a pair of conjugate complex roots and $|\lambda_1|=|\lambda_2|=1$ if and only

       if $-2<B<2$ and $C=1;$

(i.6) $\lambda_1=\lambda_2=-1$ if and only if $F(-1)=0$ and $B=2.$

\item[] (ii) If $F(1)=0,$ namely, 1 is one root of $F(\lambda)=0,$ then the other root $\lambda$ satisfies

$|\lambda|=(<,>)1$ if and only if $|C|=(<,>)1.$

\item[] (iii) If $F(1)<0,$ then $F(\lambda)=0$ has one root lying in $(1;\infty).$ Moreover,

(iii.1) the other root $\lambda$ satisfies $\lambda<(=)-1$ if and only if $F(-1)<(=)0;$

(iii.2) the other root $\lambda$ satisfies $-1<\lambda<1$ if and only if $F(-1)>0.$
\end{itemize}
\end{lemma}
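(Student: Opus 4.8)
This is the classical Schur--Cohn/Jury root-location criterion for a real quadratic, and the plan is to reduce the whole statement to three elementary observations about $F(\lambda)=\lambda^2+B\lambda+C$. By Vieta's formulas $\lambda_1\lambda_2=C$ and $\lambda_1+\lambda_2=-B$; next, $F(1)=(1-\lambda_1)(1-\lambda_2)$ and $F(-1)=(1+\lambda_1)(1+\lambda_2)$; finally, the two roots form a genuine complex-conjugate pair precisely when the discriminant $\Delta=B^2-4C$ is negative, in which case $|\lambda_1|=|\lambda_2|=\sqrt{C}$ and, since then $F(1)=|1-\lambda_1|^2$ and $F(-1)=|1+\lambda_1|^2$, both $F(1)>0$ and $F(-1)>0$ hold automatically. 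Consequently the hypotheses of (ii) and (iii), where $F(1)\leq 0$, already force the roots to be real, and part (i) splits cleanly into a ``real roots'' branch and a ``conjugate pair'' branch.

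For part (i) I would argue from signs. In the real-root case, $F(1)>0$ says that both roots are $<1$ or both are $>1$; the sign of $F(-1)$ records whether the roots straddle $-1$ or lie on one side of it; and the sign and magnitude of $C=\lambda_1\lambda_2$ record their common sign and the size of their product. A short case check combining these three pieces of information yields (i.1)--(i.4): for (i.1) the only configuration consistent with $C<1$ is that both roots lie in $(-1,1)$, the remaining ones forcing $|C|>1$; for (i.3) one root is pinned below $-1$ and the other inside $(-1,1)$; (i.4) is the complementary ``both roots outside the unit circle'' case; and so on, each equivalence being checked in both directions. In the conjugate-pair case I simply read off $|\lambda_1|=\sqrt{C}$, so $|\lambda_1|<1\iff C<1$ and $|\lambda_1|>1\iff C>1$, which fits (i.1) and (i.4), while the boundary $\Delta<0$ with $C=1$ --- equivalently $-2<B<2$ and $C=1$ --- is exactly (i.5). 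For (i.2) and (i.6) I use that $F(-1)=0$ means $-1$ is a root, so by Vieta the companion root equals $1-B$: it coincides with $-1$ iff $B=2$, giving the double root of (i.6), and differs from $-1$ otherwise, giving (i.2); in both cases $F(1)>0$ holds automatically (here $F(1)=2B$, respectively $F(1)=4$), so there is no clash with the standing hypothesis of part (i).

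Part (ii) is immediate: if $F(1)=0$ then $\lambda_1=1$ is a root, so $\lambda_2=C$ by Vieta and $|\lambda_2|=|C|$. For part (iii), $F(1)<0$ forces real roots lying on opposite sides of $1$, so exactly one root, say $\mu$, lies in $(1,\infty)$; then $1+\mu>0$, hence $F(-1)=(1+\lambda)(1+\mu)$ has the same sign as $1+\lambda$ for the remaining root $\lambda$, which gives (iii.1) (the cases $\lambda<-1$ and $\lambda=-1$) and (iii.2) (the case $-1<\lambda<1$, using also $\lambda<1$) at once.

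I do not expect a genuine conceptual obstacle; the one thing that needs care is the bookkeeping in part (i) --- establishing each equivalence in both directions and never silently discarding the complex-conjugate subcase --- and the sign of the discriminant $\Delta=B^2-4C$ is the clean device that keeps those two subcases separated throughout.
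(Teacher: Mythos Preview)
Your proof sketch is correct and follows the standard elementary route via Vieta's formulas, the factorizations $F(\pm 1)=(1\mp\lambda_1)(1\mp\lambda_2)$, and the discriminant $\Delta=B^2-4C$ to separate the real and complex-conjugate branches. Note, however, that the paper does \emph{not} prove this lemma at all: it is quoted verbatim as a known result with a citation to Cheng Wang and Xianyi Li, \emph{Stability and Neimark--Sacker bifurcation of a semi-discrete population model}, J.\ Appl.\ Anal.\ Comput.\ \textbf{4} (2014), 419--435. So there is no ``paper's own proof'' to compare against; you are supplying an argument where the author chose simply to cite.

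One small correction: in your discussion of (i.2) you assert that ``$F(1)>0$ holds automatically (here $F(1)=2B$)'', but $2B$ need not be positive. This slip is harmless for the proof, since $F(1)>0$ is the \emph{standing hypothesis} of part (i) rather than something to be derived from the conditions in (i.2); the equivalence you establish is unaffected. Simply drop that parenthetical remark (keeping it only for (i.6), where indeed $F(1)=4>0$), and the write-up is clean.
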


\begin{pro} The fixed point $E_4=(\overline{u},\overline{v})$ mentioned in Proposition \ref{prop2} of the operator (\ref{discr}) is a non-hyperbolic fixed point.

\end{pro}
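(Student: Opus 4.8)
The plan is to evaluate the Jacobian $J(u,v)$ from~(\ref{jac}) at the point $E_4=(\overline{u},\overline{v})$ and show that the characteristic polynomial has the form $F(\lambda)=\lambda^2+B\lambda+C$ with $C=\det J(\overline u,\overline v)=1$, which by Lemma~\ref{lem1}(i.5) (or by the case analysis for $F(1)=0$ or $F(1)<0$) forces at least one eigenvalue to have modulus one, hence non-hyperbolicity. The key algebraic fact to exploit is that $\overline u$ is the \emph{double} root of $c\theta u^2-(\beta-rc-\theta)u+r=0$, equivalently $D=(\beta-rc-\theta)^2-4cr\theta=0$; this degeneracy is exactly what should make $\det J=1$ collapse out nicely.

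Concretely, I would first simplify the lower-right entry of~(\ref{jac}). At any positive fixed point the second equation of~(\ref{sys1}) gives $\frac{\beta u}{1+cu}=r+\theta u$, so $\frac{\beta u}{1+cu}+1-r-\theta u = 1$; thus the $(2,2)$ entry of $J$ equals $1$ at every positive fixed point, in particular at $E_4$. Then
\begin{equation}\label{detE4}
\det J(\overline u,\overline v)=\left(2-2\overline u-\frac{\overline v}{(1+c\overline u)^2}\right)\cdot 1-\left(-\frac{\overline u}{1+c\overline u}\right)\left(\frac{\beta \overline v}{(1+c\overline u)^2}-\theta \overline v\right),
\end{equation}
and the trace is $B_{\mathrm{tr}}=2-2\overline u-\frac{\overline v}{(1+c\overline u)^2}+1$, so in the notation of Lemma~\ref{lem1} we have $B=-B_{\mathrm{tr}}$ and $C=\det J(\overline u,\overline v)$. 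Using $\overline v=(1-\overline u)(1+c\overline u)$ to clear denominators, the claim $C=1$ becomes a polynomial identity in $\overline u$ (with coefficients in $\beta,r,\theta,c$) that should reduce, after substituting $r=c\theta\,\overline u^2 - (\beta-rc-\theta)\overline u$-type relations coming from $\overline u$ being the double root, to $0=0$. It is worth double-checking against the explicit value $\overline u=\frac{r}{\sqrt\theta(\sqrt\beta-\sqrt\theta)}$ and $c=\frac{\beta+\theta-2\sqrt{\beta\theta}}{r}=\frac{(\sqrt\beta-\sqrt\theta)^2}{r}$, which give the convenient relation $c\overline u=\frac{\sqrt\beta-\sqrt\theta}{\sqrt\theta}=\frac{\sqrt\beta}{\sqrt\theta}-1$, hence $1+c\overline u=\sqrt{\beta/\theta}$ — a clean form that should make the arithmetic in~(\ref{detE4}) short.

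Once $C=\det J(\overline u,\overline v)=1$ is established, I would finish by invoking Lemma~\ref{lem1}: whatever the sign of $F(1)=1+B+C=2+B$, one root always has modulus exactly $1$ — if $-2<B<2$ both roots lie on the unit circle (case i.5), if $F(1)=0$ then $1$ is a root, and if $F(1)<0$ then combined with $C=1$ one checks the remaining root is $-1$ or has modulus $1$ — so $E_4$ is non-hyperbolic by Definition~\ref{def1}(iv). The main obstacle I anticipate is purely computational: verifying the identity $\det J(\overline u,\overline v)=1$ requires carefully using both the fixed-point relations and the double-root (discriminant-zero) condition simultaneously, and it is easy to get lost in the algebra unless one works with the simplifying substitutions $1+c\overline u=\sqrt{\beta/\theta}$ and $r+\theta\overline u=\frac{\beta\overline u}{1+c\overline u}=\overline u\sqrt{\beta\theta}$ from the outset. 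No genuinely deep step is needed beyond that.
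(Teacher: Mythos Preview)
Your plan contains a genuine error: the claim $C=\det J(\overline u,\overline v)=1$ is \emph{false} in general, and the computation you outline would not produce it. Using your own simplification $1+c\overline u=\sqrt{\beta/\theta}$, one has $\dfrac{\beta}{(1+c\overline u)^2}-\theta=0$, so the cross term in~(\ref{detE4}) vanishes and
\[
\det J(\overline u,\overline v)=2-2\overline u-\frac{\overline v}{(1+c\overline u)^2}
=2-2\overline u-\frac{1-\overline u}{1+c\overline u}
=(1-\overline u)\,\frac{1+2c\overline u}{1+c\overline u},
\]
which equals $1$ only in the accidental case $\overline u=(c-1)/(2c)$. So the route ``show $C=1$ and invoke Lemma~\ref{lem1}(i.5)'' does not work.

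What the identity $\dfrac{\beta}{(1+c\overline u)^2}=\theta$ actually gives you is that the $(2,1)$ entry of the simplified Jacobian~(\ref{jac1}) vanishes at $E_4$, so $J(\overline u,\overline v)$ is upper triangular with diagonal entries $(1-\overline u)\frac{1+2c\overline u}{1+c\overline u}$ and $1$; hence $\lambda=1$ is an eigenvalue and $E_4$ is non-hyperbolic. Equivalently, $F(1,\overline u)=\overline u(1-\overline u)\bigl(\frac{\beta}{(1+c\overline u)^2}-\theta\bigr)=0$. This is exactly the paper's argument. Your key algebraic observation $1+c\overline u=\sqrt{\beta/\theta}$ is correct and is the whole point; you simply drew the wrong conclusion from it (an eigenvalue equal to $1$, not a determinant equal to $1$).
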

\begin{proof} Recall that for coordinates of the positive fixed points we have 
\begin{equation}\label{fpeq2}
v=(1-u)(1+cu), \ \ \ \ c\theta u^2-(\beta-rc-\theta)u+r=0
\end{equation}
and $0<u<1$ . In addition, for the fixed point $E_4,$ $\beta>\frac{(r+\theta)^2}{\theta}$ and $c=\frac{\beta+\theta-2\sqrt{\beta\theta}}{r}.$ Using (\ref{fpeq2}) if we simplify the Jacobian matrix (\ref{jac}) then we get the following form for $J(u,v):$
\begin{equation}\label{jac1}
J(u,v)=\begin{bmatrix}
(1-u)(\frac{1+2cu}{1+cu}) & -\frac{u}{1+cu}\\
(1-u)(1+cu)[\frac{\beta}{(1+cu)^2}-\theta] & 1
\end{bmatrix}
\end{equation}
The characteristic equation is
\begin{equation}\label{chareq}
F(\lambda, u)=\left((1-u)\left(\frac{1+2cu}{1+cu}\right)-\lambda\right)(1-\lambda)+u(1-u)\left(\frac{\beta}{(1+cu)^2}-\theta\right)=0.
\end{equation}
So, $F(1, u)=u(1-u)\left(\frac{\beta}{(1+cu)^2}-\theta\right).$ Let us solve the equation with respect to $\overline{u}:$
$$F(1, \overline{u})=0 \Rightarrow \overline{u}(1-\overline{u})\left(\frac{\beta}{(1+c\overline{u})^2}-\theta\right)=0,$$
since $0<\overline{u}<1,$ we get
$$\frac{\beta}{(1+c\overline{u})^2}-\theta=0 \Rightarrow \overline{u}=\frac{\sqrt{\beta}-\sqrt{\theta}}{c\sqrt{\theta}}.$$
On the other hand, $\overline{u}=\frac{r}{\sqrt{\theta}(\sqrt{\beta}-\sqrt{\theta})}.$  By equating values of $\overline{u},$ we obtain that
 $c=\frac{\beta+\theta-2\sqrt{\beta\theta}}{r}$ which is necessary condition for existence the positive fixed point $E_4.$ Thus, $F(1, \overline{u})=0,$ i.e., one eigenvalue equals to 1, by Definition \ref{def1} the fixed point $E_4$ is non-hyperbolic.
\end{proof}
Assume that the characteristic equation (\ref{chareq}) has the form $F(\lambda, u)=\lambda^2-p(u)\lambda+q(u)=0,$ where
\begin{equation}\label{bif1}
p(u)=(1-u)\left(\frac{1+2cu}{1+cu}\right)+1, \ \ \ \ q(u)=(1-u)\left(\frac{1+2cu}{1+cu}\right)+u(1-u)\left(\frac{\beta}{(1+cu)^2}-\theta\right)
\end{equation}

\begin{lemma}\label{lem2} For the fixed point $E_2=(u^*,v^*)$ of the operator (\ref{discr}), the followings hold true
$$E_{2}=\left\{\begin{array}{lll}
{\rm attractive}, \ \ {\rm if} \ \  q(u^*)<1\\[2mm]
{\rm repelling}, \ \ \ \ {\rm if} \ \  q(u^*)>1\\[2mm]
{\rm nonhyperbolic}, \ \ \ \  {\rm if}  \ \ p(u^*)<2, \ \ q(u^*)=1,
\end{array}\right.$$
where, $u^*=\frac{\beta-rc-\theta-\sqrt{(\beta-rc-\theta)^2-4cr\theta}}{2c\theta}.$
\end{lemma}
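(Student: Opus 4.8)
The plan is to apply Lemma \ref{lem1} to the characteristic polynomial $F(\lambda,u^*)=\lambda^2-p(u^*)\lambda+q(u^*)$, identifying $B=-p(u^*)$ and $C=q(u^*)$. Everything hinges on the sign of $F(1,u^*)$, so the first step is to evaluate it: from \eqref{bif1} one computes directly $F(1,u^*)=1-p(u^*)+q(u^*)=u^*(1-u^*)\bigl(\frac{\beta}{(1+cu^*)^2}-\theta\bigr)$, exactly as in \eqref{chareq}. The key claim to establish is that this quantity is \emph{strictly positive} for $E_2$. Since $0<u^*<1$ (from Proposition \ref{prop2} and \eqref{fpeq2}), this reduces to showing $\frac{\beta}{(1+cu^*)^2}>\theta$, i.e. $u^*<\frac{\sqrt{\beta}-\sqrt{\theta}}{c\sqrt{\theta}}=:u_0$. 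Here I would use that $u_0$ is precisely the value $\overline u$ at which $F(1,\cdot)$ vanishes (as computed in the preceding proposition), that $u^*$ is the \emph{smaller} root $u_1$ of the quadratic $c\theta u^2-(\beta-rc-\theta)u+r=0$ in \eqref{fpeq2}, and that $u^{**}=u_2$ is the larger root; a short argument comparing $u_0$ with the roots $u_1\le u_2$ of that quadratic (e.g. checking the sign of $c\theta u_0^2-(\beta-rc-\theta)u_0+r$, or using $u_1u_2=r/(c\theta)$ and $u_1+u_2=(\beta-rc-\theta)/(c\theta)$) should give $u_1<u_0$ in all the parameter regimes of Proposition \ref{prop2}(i)--(iii) where $E_2$ exists. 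This is the main obstacle: pinning down $F(1,u^*)>0$ rigorously across all the cases, since it is what licenses using branch (i) of Lemma \ref{lem1} rather than (ii) or (iii).

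Granting $F(1,u^*)>0$, the rest is bookkeeping with Lemma \ref{lem1}(i). For the \textbf{attractive} case: by (i.1), $|\lambda_1|<1$ and $|\lambda_2|<1$ iff $F(-1,u^*)>0$ and $C=q(u^*)<1$. I would check that $F(-1,u^*)=1+p(u^*)+q(u^*)>0$ automatically, since $p(u^*)=(1-u^*)\frac{1+2cu^*}{1+cu^*}+1>0$ for $0<u^*<1$ and $q(u^*)=p(u^*)-1+F(1,u^*)-0>$ (its value is positive because $p(u^*)>1$ and $F(1,u^*)>0$ give $q(u^*)=p(u^*)-1+F(1,u^*)>0$); hence $F(-1,u^*)>0$ is never the binding constraint and the condition collapses to $q(u^*)<1$. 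For the \textbf{repelling} case: by (i.4), $|\lambda_1|>1$ and $|\lambda_2|>1$ iff $F(-1,u^*)>0$ and $C=q(u^*)>1$; since we just argued $F(-1,u^*)>0$ always holds, this is exactly $q(u^*)>1$. For the \textbf{non-hyperbolic} case: by (i.5), $\lambda_1,\lambda_2$ form a conjugate pair on the unit circle iff $-2<B<2$ and $C=1$, i.e. $-2<-p(u^*)<2$ and $q(u^*)=1$; since $p(u^*)>0$ the left inequality $-2<p(u^*)$ is free, so the condition is $p(u^*)<2$ together with $q(u^*)=1$, matching the statement.

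Finally I would note for completeness why no other eigenvalue configuration occurs in the stated trichotomy's boundary: when $q(u^*)=1$ but $p(u^*)\ge2$, Lemma \ref{lem1}(i.6) or the degenerate $\lambda_1=\lambda_2=1$ situation would apply, but under the standing hypotheses these are excluded (or simply left outside the stated cases). I would organize the write-up as: (1) derive $F(1,u^*)$, $F(-1,u^*)$, and verify $F(1,u^*)>0$ via the root comparison $u^*<u_0$; (2) verify $F(-1,u^*)>0$ unconditionally using $p(u^*)>1$; (3) read off the three cases from Lemma \ref{lem1}(i.1), (i.4), (i.5) respectively. Steps (2) and (3) are routine; step (1), the inequality $u^*<\frac{\sqrt\beta-\sqrt\theta}{c\sqrt\theta}$, is where the real content lies and where I would spend the most care.
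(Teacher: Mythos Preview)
Your proposal is correct and follows essentially the same route as the paper: establish $F(1,u^*)>0$ by reducing to $u^*<\frac{\sqrt\beta-\sqrt\theta}{c\sqrt\theta}$, then $F(-1,u^*)>0$, then read off the three cases from Lemma~\ref{lem1}(i.1), (i.4), (i.5). The only cosmetic difference is in how the key inequality $u^*<u_0$ is verified: the paper manipulates the inequality directly until it collapses to the existence condition $c<\frac{(\sqrt\beta-\sqrt\theta)^2}{r}$, whereas you propose checking the sign of the quadratic at $u_0$ or using Vieta's relations---either works and leads to the same endpoint.
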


\begin{proof}\textbf{ Step-1}. By equation (\ref{chareq}), let's check the sign of $F(1,u^*):$
$$F(1,u^*)=u^*(1-u^*)\left(\frac{\beta}{(1+cu^*)^2}-\theta\right)>0 \ \ \Leftrightarrow \ \ \frac{\beta}{(1+cu^*)^2}-\theta>0,$$

$$\Leftrightarrow \ \ u^*<\frac{\sqrt{\beta}-\sqrt{\theta}}{c\sqrt{\theta}} \ \ \Leftrightarrow \ \ \frac{\beta-rc-\theta-\sqrt{(\beta-rc-\theta)^2-4cr\theta}}{2c\theta}<\frac{\sqrt{\beta}-\sqrt{\theta}}{c\sqrt{\theta}}$$
$$\Rightarrow \beta-rc-\theta-\sqrt{(\beta-rc-\theta)^2-4cr\theta}<2\sqrt{\beta\theta}-2\theta \Rightarrow (\sqrt{\beta}-\sqrt{\theta})^2<rc+\sqrt{(\beta-rc-\theta)^2-4cr\theta}$$
Since, $(\sqrt{\beta}-\sqrt{\theta})^2-rc>0$ we have
$$(\beta-rc-\theta)^2-4cr\theta>(\beta+\theta-rc-2\sqrt{\beta\theta})^2 \Rightarrow \sqrt{\beta\theta}(\beta+\theta-rc)>2\beta\theta \Rightarrow $$
$\Rightarrow c<\frac{(\sqrt{\beta}-\sqrt{\theta})^2}{r}.$
Recall that, the last inequality is necessary condition to existence of positive fixed point in Lemma \ref{lem2}. Hence, $F(1,u^*)>0$ is always true. \\
\textbf{Step-2}. In this step, we study the sign of $F(-1,u^*).$
$$F(-1,u^*)=2\left((1-u^*)\left(\frac{1+2cu^*}{1+cu^*}\right)+1\right)+u^*(1-u^*)\left(\frac{\beta}{(1+cu^*)^2}-\theta\right).$$
In the first step, we have shown that $\frac{\beta}{(1+cu^*)^2}-\theta>0$ is always true. Thus, $F(-1,u^*)>0$ also always true.\\
\textbf{Step-3}. In the previous steps we have shown that for the fixed point $E_2,$ $F(1,u^*)>0$ and $F(-1,u^*)>0,$ by assertions (i.1), (i.4) of Lemma \ref{lem1} we get the proof of first two assertions of the theorem. By assertion (i.5), if $p(u^*)<2$ and $q(u^*)=1$ then the characteristic equation (\ref{chareq}) has the pair of complex conjugate eigenvalues with module 1. So, we can complete the proof of the lemma. Note that the parameters can be chosen such that each case in the lemma holds.
\end{proof}

\begin{pro} For the fixed point $E_3=(u^{**},v^{**})$ of the operator (\ref{discr}), the followings hold true
$$E_{3}=\left\{\begin{array}{lll}
{\rm saddle}, \ \ {\rm if} \ \  F(-1, u^{**})>0\\[2mm]
{\rm repelling}, \ \ \ \ {\rm if} \ \  F(-1, u^{**})<0\\[2mm]
{\rm nonhyperbolic}, \ \ \ \  {\rm if}  \ \ F(-1, u^{**})=0,
\end{array}\right.$$
\end{pro}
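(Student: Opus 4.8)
The plan is to mirror the three-step structure used for $E_2$ in Lemma \ref{lem2}, but with the roles of $F(1,\cdot)$ and $F(-1,\cdot)$ reversed: for $E_3$ the key is that $F(1,u^{**})<0$, so assertion (iii) of Lemma \ref{lem1} applies instead of assertions (i.1)--(i.5). First I would compute $F(1,u^{**})=u^{**}(1-u^{**})\bigl(\tfrac{\beta}{(1+cu^{**})^2}-\theta\bigr)$ from the characteristic equation (\ref{chareq}), exactly as in Step-1 of Lemma \ref{lem2}. Since $0<u^{**}<1$, the sign of $F(1,u^{**})$ is the sign of $\tfrac{\beta}{(1+cu^{**})^2}-\theta$, i.e. it is negative iff $u^{**}>\tfrac{\sqrt\beta-\sqrt\theta}{c\sqrt\theta}$. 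The task is then to show this inequality holds under the existence hypotheses of case (iii) of Proposition \ref{prop2} ($\beta>\tfrac{(r+\theta)^2}{\theta}$ and $\tfrac{\beta-r-\theta}{r+\theta}<c<\tfrac{\beta+\theta-2\sqrt{\beta\theta}}{r}$). This should follow from the computation already done for $E_2$: there it was shown that $u^*<\tfrac{\sqrt\beta-\sqrt\theta}{c\sqrt\theta}$ is equivalent to $c<\tfrac{(\sqrt\beta-\sqrt\theta)^2}{r}$, which is the existence condition; since $u^{**}$ is the larger root of the same quadratic $c\theta u^2-(\beta-rc-\theta)u+r=0$ and $\tfrac{\sqrt\beta-\sqrt\theta}{c\sqrt\theta}$ is precisely the value of $u$ at which $F(1,u)$ changes sign, one expects $u^*<\tfrac{\sqrt\beta-\sqrt\theta}{c\sqrt\theta}<u^{**}$, giving $F(1,u^{**})<0$. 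Making this chain of equivalences rigorous — in particular checking that the threshold genuinely separates the two roots and that no squaring step reverses an inequality — is the one place that needs care.

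Once $F(1,u^{**})<0$ is established, the rest is a direct appeal to Lemma \ref{lem1}(iii). By that lemma, $F(\lambda,u^{**})=0$ always has one root in $(1,\infty)$, so one eigenvalue is automatically of modulus greater than $1$; the nature of $E_3$ is then decided entirely by the second root. By (iii.2), if $F(-1,u^{**})>0$ the other root lies in $(-1,1)$, so $|\lambda_1|>1$, $|\lambda_2|<1$ and $E_3$ is a saddle by Definition \ref{def1}(iii). By (iii.1), if $F(-1,u^{**})<0$ the other root satisfies $\lambda<-1$, hence $|\lambda|>1$, both eigenvalues exceed $1$ in modulus, and $E_3$ is repelling. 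If $F(-1,u^{**})=0$, the other root equals $-1$, so $|\lambda_1|=1$ and $E_3$ is non-hyperbolic by Definition \ref{def1}(iv). This exhausts the three cases in the statement.

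The main obstacle, as noted, is the sign analysis of $F(1,u^{**})$: it requires correctly locating the threshold $\tfrac{\sqrt\beta-\sqrt\theta}{c\sqrt\theta}$ relative to the two roots $u^*<u^{**}$ of the fixed-point quadratic, and verifying that the chain of algebraic manipulations (involving a squaring step, which is only reversible when both sides are known to be nonnegative) goes through under the hypotheses of Proposition \ref{prop2}(iii). A clean way to avoid the squaring pitfalls is to observe that $g(u):=\tfrac{\beta}{(1+cu)^2}-\theta$ is strictly decreasing on $(0,1)$, that $F(1,u)=u(1-u)g(u)$ therefore vanishes on $(0,1)$ exactly at $u=\tfrac{\sqrt\beta-\sqrt\theta}{c\sqrt\theta}$, and that this zero equals $\overline u$ precisely when $c=\tfrac{\beta+\theta-2\sqrt{\beta\theta}}{r}$; for $c$ strictly below that value the zero of $g$ is strictly less than $u^{**}$ (since $u^{**}$ moves toward $1$ as $c$ decreases while the zero of $g$ stays between $u^*$ and $u^{**}$ by the intermediate value property of $F(1,\cdot)$ together with the sign of $F(1,u^*)>0$ from Lemma \ref{lem2}), whence $g(u^{**})<0$ and $F(1,u^{**})<0$. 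Once that monotonicity argument is in place, the remainder of the proof is immediate from Lemma \ref{lem1}(iii).
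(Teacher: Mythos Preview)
Your overall strategy---show $F(1,u^{**})<0$ and then read off the three cases from Lemma~\ref{lem1}(iii)---is exactly the paper's, and your derivation of the saddle/repelling/non-hyperbolic trichotomy from (iii.1) and (iii.2) is correct.

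The gap is in your justification of $F(1,u^{**})<0$. Your ``clean'' monotonicity argument is circular: to place the zero $u_0=\tfrac{\sqrt\beta-\sqrt\theta}{c\sqrt\theta}$ of $g$ strictly between $u^*$ and $u^{**}$ via the intermediate value property you would need both $F(1,u^*)>0$ \emph{and} $F(1,u^{**})<0$, but the latter is precisely what you are trying to prove. Knowing only that $g$ is decreasing and $F(1,u^*)>0$ gives $u^*<u_0$; it says nothing about the position of $u_0$ relative to $u^{**}$. The continuity-in-$c$ heuristic does not rescue this either, since both $u^{**}$ and $u_0$ increase as $c$ decreases, so one needs a separate comparison.

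The paper's argument for this step is a single observation that removes all your worries about irreversible squaring. Rearranging $u^{**}>\tfrac{\sqrt\beta-\sqrt\theta}{c\sqrt\theta}$ gives
\[
\sqrt{(\beta-rc-\theta)^2-4cr\theta}\;>\;rc-(\sqrt\beta-\sqrt\theta)^2,
\]
and this holds trivially because the right-hand side is \emph{negative}: the existence condition for $E_3$ (Proposition~\ref{prop2}(iii)) is exactly $c<\tfrac{(\sqrt\beta-\sqrt\theta)^2}{r}$, i.e.\ $(\sqrt\beta-\sqrt\theta)^2-rc>0$. No squaring step is ever needed. Replace your third paragraph with this observation and the proof is complete.
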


\begin{proof} We consider the inequality $F(1,u^{**})<0:$
$$F(1,u^{**})=u^{**}(1-u^{**})\left(\frac{\beta}{(1+cu^{**})^2}-\theta\right)<0 \ \ \Leftrightarrow \ \ \frac{\beta}{(1+cu^{**})^2}-\theta<0,$$

$$\Leftrightarrow \ \ u^{**}>\frac{\sqrt{\beta}-\sqrt{\theta}}{c\sqrt{\theta}} \ \ \Leftrightarrow \ \ \frac{\beta-rc-\theta+\sqrt{(\beta-rc-\theta)^2-4cr\theta}}{2c\theta}>\frac{\sqrt{\beta}-\sqrt{\theta}}{c\sqrt{\theta}}$$
$\Rightarrow \beta-rc-\theta+\sqrt{(\beta-rc-\theta)^2-4cr\theta}>2\sqrt{\beta\theta}-2\theta \Rightarrow$ $$\sqrt{(\beta-rc-\theta)^2-4cr\theta}>rc-(\sqrt{\beta}-\sqrt{\theta})^2.$$
Last inequality is always true, because, $(\sqrt{\beta}-\sqrt{\theta})^2-rc>0.$  Thus, $F(1,u^{**})<0$ is always true and by Lemma \ref{lem1} , one eigenvalue belongs to $(1;\infty).$ All three conditions of the proposition follow directly from (iii.1) and (iii.2) of the Lemma \ref{lem1} . The proof is completed.

\end{proof}

\section{Neimark-Sacker bifurcation analysis}

In this section we obtain conditions for occurrence of Neimark-Sacker bifurcation at the fixed point $E_2(u^*,v^*)$. First, we give the following definitions and well-known theorems.

Recall that in the dynamical system $(T,X,\phi^t),$ $T$ is a time set, $X$ is a state space and  $\phi^t: X\rightarrow X$ is a family of evolution operators parameterized by $t\in T.$ 

\begin{defn}(see \cite{Kuz}) A dynamical system $\{T, \mathbb{R}^n, \varphi^t\}$ is called \textbf{locally topologically equivalent} near a fixed point $x_0$ to a dynamical system $\{T, \mathbb{R}^n, \psi^t\}$
near a fixed point  $y_0$ if there exists a homeomorphism $h : \mathbb{R}^n \rightarrow \mathbb{ R}^n$ that
is

(i) defined in a small neighborhood $U\subset \mathbb{R}^n$ of $x_0$;

(ii) satisfies $y_0 = h(x_0)$;

(iii) maps orbits of the first system in $U$ onto orbits of the second system
in $V = f(U)\subset\mathbb{R}^n $, preserving the direction of time.
\end{defn}

Recall that, the phase portrait of a dynamical system is a partitioning of the state space into orbits. In the dynamical system depending on parameters, if parameters vary then the phase portrait also varies. There are two possibilities: either the system remains topologically equivalent to the original one, or its topology changes.

\begin{defn}(see \cite{Kuz}) The appearance of a topologically nonequivalent phase portrait under variation of parameters is called a \textbf{bifurcation}.
\end{defn}

Suppose that given two-dimensional discrete-time system depending on parameters and its Jacobian matrix at the nonhyperbolic fixed point has two complex conjugate eigenvalues $\mu_{1,2}$ with modules one.

\begin{defn} (see \cite{Kuz})  The bifurcation corresponding to the presence of $\mu_{1,2}$ is called a \textbf{Neimark-Sacker} (or torus) bifurcation.

\end{defn}

From the third case of the Lemma \ref{lem2}, we obtain that at the positive fixed point $E_2(u^*,v^*)$ the Jacobian has a pair of complex conjugate eigenvalues with modules 1 if $p(u^*)<2$ and $q(u^*)=1,$ where  $p(u^*), q(u^*)$ are defined as (\ref{bif1}).

We notice that all parameters belong to the set:
$$S_{E_2}=\left\{(r,c,\beta,\theta)\in(0,+\infty): \ \ c<\frac{\beta+\theta-2\sqrt{\beta\theta}}{r}, \ \ \theta=\theta_0  \right\}$$
and assume that $p(u^*)<2, \ \ q(u^*)=1$ in the set $S_{E_2}.$

Using Wolfram Alpha we obtained that $q(u^*)<1$ (i.e., $E_2$ is an attractive) if $\theta>\theta_0$ and $q(u^*)>1$  (i.e., $E_2$ is repelling) if $\theta<\theta_0.$

The fixed point $E_2(u^*,v^*)$ can pass through a Neimark-Sacker bifurcation when
the parameters $(r,c,\beta,\theta)\in S_{E_2}$ and $\theta$ varies in the small neighborhood of $\theta_0$.

We choose the parameter $\theta$ as a bifurcation parameter to study the Neimark-Sacker bifurcation for the positive fixed
point $E_2(u^*,v^*)$ of the system (\ref{discr}) by using the Center Manifold Theorem and
bifurcation theory (see \cite{Guc}, \cite{Kuz}, \cite{Rob}, \cite{Wing}).

Let's consider the system (\ref{discr}) with parameters $(r,c,\beta,\theta)\in S_{E_2}$, which is described by

\begin{equation}\label{bif2}
\begin{cases}
u\rightarrow u(2-u)-\frac{uv}{1+cu}\\[2mm]
v\rightarrow \frac{\beta uv}{1+cu}+(1-r)v-\theta_0 uv.
\end{cases}
\end{equation}

\textbf{The first step}. Giving a perturbation $\theta_*$ of parameter $\theta_0,$ we consider a perturbation of the system (\ref{bif2}) as follows:

 \begin{equation}\label{bif3}
\begin{cases}
u\rightarrow u(2-u)-\frac{uv}{1+cu}\\[2mm]
v\rightarrow \frac{\beta uv}{1+cu}+(1-r)v-(\theta_0+\theta_*) uv.
\end{cases}
\end{equation}
where $|\theta_*|\ll1.$

\textbf{The second step}. Let $x=u-u^*$ and $y=v-v^*,$ which transform the fixed point $E_2(u^*,v^*)$ to the origin (0,0) and system (\ref{bif3}) into

 \begin{equation}\label{bif4}
\begin{cases}
x\rightarrow (x+u^*)(2-u^*-x)-\frac{(x+u^*)(y+v^*)}{1+cu^*+cx}-u^*\\[2mm]
y\rightarrow (y+v^*)\left(\frac{\beta (x+u^*)}{1+cu^*+cx}+1-r-(\theta_0+\theta_*)(x+u^*)\right)-v^*.
\end{cases}
\end{equation}
The Jacobian of the system (\ref{bif4}) at the point (0,0) is

\begin{equation}\label{jac2}
J(0,0)=\begin{bmatrix}
(1-u^*)(\frac{1+2cu^*}{1+cu^*}) & -\frac{u^*}{1+cu^*}\\
(1-u^*)(1+cu^*)[\frac{\beta}{(1+cu^*)^2}-\theta_0-\theta_*] & 1-\theta_*u^*
\end{bmatrix}
\end{equation}
and its characteristic equation is
$$\lambda^2-a(\theta_*)\lambda+b(\theta_*)=0,$$
where
$$a(\theta_*)=Tr(J)=\frac{(1-u^*)(1+2cu^*)}{1+cu^*}+1-\theta_*u^*,$$
 and

 $b(\theta_*)=det(J)=
(1-\theta_*u^*)\frac{(1-u^*)(1+2cu^*)}{1+cu^*}+u^*(1-u^*)[\frac{\beta}{(1+cu^*)^2}-\theta_0-\theta_*]=$
$$=1-\frac{\theta_*u^*(1-u^*)(2+3cu^*)}{1+cu^*}.$$

The roots are
\begin{equation}\label{bif5}
\lambda_{1,2}=\frac{1}{2}[a(\theta_*)\pm i \sqrt{4b(\theta_*)-a^2(\theta_*)}].
\end{equation}
Thus,
\begin{equation}\label{bif6}
|\lambda_{1,2}|=\sqrt{b(\theta_*)}
\end{equation}
and

\begin{equation}\label{bif7}
\frac{d|\lambda_{1,2}|}{d\theta_*}\Bigm|_{\theta_*=0}=-\frac{1}{2\sqrt{b(\theta_*)}}\frac{u^*(1-u^*)(2+3cu^*)}{1+cu^*}\Bigm|_{\theta_*=0}=-\frac{u^*(1-u^*)(2+3cu^*)}{2(1+cu^*)}<0.
\end{equation}
Hence, the transversality condition is satisfied. In addition, it is required the nondegeneracy condition (no strong resonance) $\lambda_{1,2}^i\neq1, \ \ i=1,2,3,4$ when $\theta_*=0.$ Since $1<a(0)=\frac{(1-u^*)(1+2cu^*)}{1+cu^*}+1<2$ and $b(0)=1$ it can be shown that
\begin{equation}\label{nondeg}
 \lambda_{1,2}^m(0)\neq1, \ \   m=1,2,3,4.
\end{equation}
\textbf{The third step}. In order to derive the normal form of the system (\ref{bif4}) when $\theta_*=0$,
we expand the system (\ref{bif4}) as Taylor series at $(x,y)=(0,0)$ up to the following third-order

\begin{equation}\label{bif8}
\begin{cases}
x\rightarrow a_{10}x+a_{01}y+a_{20}x^2+a_{11}xy+a_{02}y^2+a_{30}x^3+a_{21}x^2y+a_{12}xy^2+a_{03}y^3+O(\rho_1^4)\\[2mm]
y\rightarrow b_{10}x+b_{01}y+b_{20}x^2+b_{11}xy+b_{02}y^2+b_{30}x^3+b_{21}x^2y+b_{12}xy^2+b_{03}y^3+O(\rho_1^4),
\end{cases}
\end{equation}
where $\rho_1=\sqrt{x^2+y^2},$
\begin{equation}
\begin{split}
&a_{10}=\frac{(1-u^*)(1+2cu^*)}{1+cu^*}, \ \ a_{01}=-\frac{u^*}{1+cu^*}, \ \ a_{20}=\frac{c(1-u^*)}{(1+cu^*)^2}-1,\\
&a_{11}=-\frac{1}{(1+cu^*)^2}, \ \ a_{02}=a_{03}=a_{12}=0, \\
&a_{30}=-\frac{c^2(1-u^*)}{(1+cu^*)^3}, \ \ a_{21}=\frac{c}{(1+cu^*)^3},\\
&b_{10}=(1-u^*)(1+cu^*)\left(\frac{\beta}{(1+cu^*)^2}-\theta_0\right), \ \ b_{01}=1,\\
&b_{02}=b_{03}=b_{12}=0, \ \ b_{20}=\frac{\beta c(1-u^*)}{(1+cu^*)^2}, \ \ b_{11}=\frac{\beta}{(1+cu^*)^2},\\
&b_{21}=-\frac{\beta c}{(1+cu^*)^3}, \ \ b_{30}=\frac{\beta c^2(1-u^*)}{(1+cu^*)^3}.
\end{split}
\end{equation}
Then
$$J(E_2)=\begin{bmatrix}
a_{10} & -a_{01}\\
b_{10} & b_{01}
\end{bmatrix}  \ \ \Rightarrow \ \ J(E_2)=\begin{bmatrix}
K & -\frac{u^*}{1+cu^*}\\
m & 1
\end{bmatrix}$$
where $K=\frac{(1-u^*)(1+2cu^*)}{1+cu^*}$ and $m=(1-u^*)(1+cu^*)\left(\frac{\beta}{(1+cu^*)^2}-\theta_0\right).$ Two eigenvalues of the matrix $J(E_2)$ are
$$\lambda_{1,2}=\frac{1+K\pm i\sqrt{-D}}{2},$$
where   $D=(1+K)^2-4<0$,   since  $1<1+K<2.$  Let us find eigenvectors corresponding to $\lambda_{1,2}.$ For eigenvalue $\lambda_1=\frac{1+K+i\sqrt{-D}}{2},$ the matrix equation is

$$(J-\lambda_1 I_2)\overline{v}_1=\begin{bmatrix}
\frac{K-1-i\sqrt{-D}}{2} & -\frac{u^*}{1+cu^*}\\
m & \frac{1-K-i\sqrt{-D}}{2}
\end{bmatrix}\begin{bmatrix}x_1\\ y_1\end{bmatrix}=\begin{bmatrix}0\\0\end{bmatrix}.$$
If we multiply first row by $-\frac{2m}{K-1-i\sqrt{-D}}$ and add to second row then we get the following equation for existence nonzero eigenvector:
$$u^*(1-u^*)\left(\frac{\beta}{(1+cu^*)^2}-\theta_0\right)+\frac{(1-u^*)(1+2cu^*)}{1+cu^*}-1=0$$
which is always true from $q(u^*)=1.$ Thus, first eigenvector is

$$v_1=\begin{bmatrix}2u^*\\(K-1)(1+cu^*)\end{bmatrix}-i\begin{bmatrix}0\\ \sqrt{-D}(1+cu^*)\end{bmatrix}.$$
Similarly, it is easy to find that next eigenvector is
$$v_2=\begin{bmatrix}2u^*\\(K-1)(1+cu^*)\end{bmatrix}+i\begin{bmatrix}0\\ \sqrt{-D}(1+cu^*)\end{bmatrix}.$$

\textbf{The fourth step}. We find the normal form of the system (\ref{bif4}). Let matrix

$T= \begin{bmatrix} 0 & 2 u^*\\ \sqrt{-D}(1+cu^*) & (K-1)(1+cu^*)\end{bmatrix}$ \ \ then \ \
 $T^{-1}= \begin{bmatrix} \frac{1-K}{2u^*\sqrt{-D}} & \frac{1}{\sqrt{-D}(1+cu^*)} \\ \frac{1}{2u^*} & 0\end{bmatrix}.$

By transformation, we get that
\begin{equation}\label{bif10}
(x,y)^T=T(X,Y)^T
\end{equation}
the system (\ref{bif8}) transforms into the following system

\begin{equation}\label{bif9}
\begin{cases}
X\rightarrow \frac{K+1}{2}X+\frac{(1-K)(K+3)}{2\sqrt{-D}}Y+F(X,Y)+O(\rho_2^4)\\[2mm]
Y\rightarrow -\frac{\sqrt{-D}}{2}X+\frac{K+1}{2}Y+G(X,Y)+O(\rho_2^4).
\end{cases}
\end{equation}
where $\rho_2^4=\sqrt{X^2+Y^2}$ and
\begin{equation}
\begin{split}
&F(X,Y)=c_{02}Y^2+c_{03}Y^3+c_{11}XY+c_{12}XY^2,\\
&G(X,Y)=d_{02}Y^2+d_{03}Y^3+d_{11}XY+d_{12}XY^2,\\
\end{split}
\end{equation}
For simplicity, denote  $s=1-c+2cu^*$ then $(1-K)(1+cu^*)=u^*s$ and from (\ref{bif8}), (\ref{bif10}) we obtain
\begin{equation}
\begin{split}
&c_{02}=\frac{(u^{*})^2}{\sqrt{-D}(1+cu^*)}(4b_{20}+s(2a_{20}-2b_{11}-a_{11}s)),\\
&c_{03}=\frac{2(u^*)^3}{\sqrt{-D}(1+cu^*)}(4b_{30}+s(2a_{30}-2b_{21}-a_{21}s)),\\
&c_{11}=\frac{u^*(2\beta-s)}{(1+cu^*)^2},\ \ c_{12}=-\frac{2c(u^*)^2(2\beta-s)}{(1+cu^*)^3},\\
&d_{02}=\frac{2u^*(c(1-u^*)-(1+cu^*)^2)-u^*s}{(1+cu^*)^2},\\
&d_{03}=\frac{2c(u^*)^2(c(1-u^*)-s)}{(1+cu^*)^3},\\
&d_{11}=-\frac{\sqrt{-D}}{1+cu^*}, \ \ d_{12}=\frac{2cu^*\sqrt{-D}}{(1+cu^*)^2}.
\end{split}
\end{equation}

In addition, the partial derivatives at (0,0) are

\begin{equation}
\begin{split}
&F_{XX}=F_{XXX}=F_{XXY}=0, \ \ F_{XY}=\frac{u^*(2\beta-s)}{(1+cu^*)^2}, \\
&F_{XYY}=-\frac{4c(u^*)^2(2\beta-s)}{(1+cu^*)^3},\\
&F_{YY}=\frac{2(u^{*})^2}{\sqrt{-D}(1+cu^*)}(4b_{20}+s(2a_{20}-2b_{11}-a_{11}s)), \\
&F_{YYY}=\frac{12(u^*)^3}{\sqrt{-D}(1+cu^*)}(4b_{30}+s(2a_{30}-2b_{21}-a_{21}s)),\\
&G_{XX}=G_{XXX}=G_{XXY}=0, \ \ G_{XY}=-\frac{\sqrt{-D}}{1+cu^*}, \\
&G_{XYY}=-\frac{4cu^*\sqrt{-D}}{(1+cu^*)^2},\\
&G_{YY}=\frac{2(2u^*(c(1-u^*)-(1+cu^*)^2)-u^*s)}{(1+cu^*)^2}, \\
&G_{YYY}=\frac{12c(u^*)^2(c(1-u^*)-s)}{(1+cu^*)^3}.\\
\end{split}
\end{equation}

\textbf{The fifth step}. We need to compute the discriminating quantity $L$ via the following formula (see \cite{Rob}), which determines
the stability of the invariant circle bifurcated from Neimark-Sacker bifurcation of the system (\ref{bif9}):
\begin{equation}\label{lya}
L=-Re\left[\frac{(1-2\lambda_1)\lambda_2^2}{1-\lambda_1}L_{11}L_{20}\right]-\frac{1}{2}|L_{11}|^2-|L_{02}|^2+Re(\lambda_2 L_{21}),
\end{equation}
where
\begin{equation}
\begin{split}
&L_{20}=\frac{1}{8}[(F_{XX}-F_{YY}+2G_{XY})+i(G_{XX}-G_{YY}-2F_{XY})],\\
&L_{11}=\frac{1}{4}[(F_{XX}+F_{YY})+i(G_{XX}+G_{YY})],\\
&L_{02}=\frac{1}{8}[(F_{XX}-F_{YY}-2G_{XY})+i(G_{XX}-G_{YY}+2F_{XY})],\\
&L_{21}=\frac{1}{16}[(F_{XXX}+F_{XYY}+G_{XXY}+G_{YYY})+i(G_{XXX}+G_{XYY}-F_{XXY}-F_{YYY})].
\end{split}
\end{equation}

After some computation we get
\begin{equation}
\begin{split}
&L_{20}=\frac{1}{4}\left(\frac{D-(u^*)^2(4b_{20}+s(2a_{20}-2b_{11}-a_{11}s))}{\sqrt{-D}(1+cu^*)}\right)-\\
&   \hspace{1cm} -\frac{i}{2}\left(\frac{u^*(c(1-u^*)-(1+cu^*)^2+\beta-s)}{(1+cu^*)^2}\right), \\
&L_{11}=\frac{1}{2}\left( \frac{(u^*)^2(4b_{20}+s(2a_{20}-2b_{11}-a_{11}s))}{\sqrt{-D}(1+cu^*)}\right)+\\
&  \hspace{1cm} +\frac{i}{2}\left(\frac{2u^*(c(1-u^*)-(1+cu^*)^2)-u^*s}{(1+cu^*)^2}\right),\\
&L_{02}=-\frac{1}{4}\left(\frac{D+(u^*)^2(4b_{20}+s(2a_{20}-2b_{11}-a_{11}s))}{\sqrt{-D}(1+cu^*)}\right)-\\
&  \hspace{1cm} -\frac{i}{2}\left(\frac{u^*(c(1-u^*)-(1+cu^*)^2-\beta)}{(1+cu^*)^2}\right),\\
&L_{21}=\frac{1}{4}\left(\frac{c(u^*)^2(3c-3cu^*-2s-2\beta)}{(1+cu^*)^3}\right)-\\
&  \hspace{1cm} -\frac{i}{4}\left(\frac{cDu^*-3(u^*)^3(1+cu^*)(4b_{30}+s(2a_{30}-2b_{21}-a_{21}s))}{\sqrt{-D}(1+cu^*)^2}\right).
\end{split}
\end{equation}

Thus, from (\ref{bif7}) and (\ref{nondeg}) it is clear that the transversality condition and the nondegeneracy condition of the system (\ref{discr}) are satisfied. So, summarizing the above discussions, we obtain the following concluding theorem.

\begin{thm}\label{bifurcation} Assume the parameters $r,c,\beta,\theta$ in the set
$$S_{E_2}=\left\{(r,c,\beta,\theta)\in(0,+\infty): \ \  c<\frac{\beta+\theta-2\sqrt{\beta\theta}}{r}, \ \ \theta=\theta_0  \right\}$$
and  $L$ be defined as  (\ref{lya}).
If $L\neq0$ then the system (\ref{discr}) undergoes a Neimark-Sacker bifurcation at the fixed point $E_2(u^*,v^*)$ when the parameter $\theta_*$ varies in the small neighborhood of origin. Moreover, if $L< 0$ (resp., $L>0$), then an attracting
(resp., repelling) invariant closed curve bifurcates from the fixed point for $\theta_*>0$ (resp., $\theta_*<0$).
\end{thm}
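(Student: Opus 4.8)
The plan is to verify that the recentred, perturbed map (\ref{bif4}) --- which is the system (\ref{discr}) with $E_2(u^*,v^*)$ moved to the origin and $\theta$ replaced by $\theta_0+\theta_*$ --- satisfies all the hypotheses of the Neimark-Sacker bifurcation theorem (see \cite{Guc}, \cite{Kuz}, \cite{Rob}, \cite{Wing}), and then to quote that theorem with $\theta_*$ as the bifurcation parameter. Four ingredients are needed: (a) at $\theta_*=0$ the linearization $J(0,0)$ of (\ref{jac2}) has a pair of complex conjugate eigenvalues of modulus one; (b) these eigenvalues cross the unit circle transversally as $\theta_*$ runs through $0$; (c) the no-strong-resonance condition $\lambda_{1,2}^m(0)\neq1$, $m=1,2,3,4$, holds; (d) the discriminating quantity $L$ of (\ref{lya}) is nonzero. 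The first three have in fact been checked in the first two steps above, and the last is the standing hypothesis of the theorem, so the proof consists of recording these facts and invoking the general theorem.

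Ingredient (a) is precisely the third alternative of Lemma \ref{lem2}: on $S_{E_2}$ we assumed $p(u^*)<2$ and $q(u^*)=1$, whence by (\ref{bif5})--(\ref{bif6}) the eigenvalues of $J(0,0)$ at $\theta_*=0$ are $\lambda_{1,2}(0)=\frac{1}{2}\big(a(0)\pm i\sqrt{4-a^2(0)}\big)$ with $|\lambda_{1,2}(0)|=\sqrt{b(0)}=1$ and, since $1<a(0)=1+K<2$, genuinely non-real. For (b), $|\lambda_{1,2}(\theta_*)|=\sqrt{b(\theta_*)}$ and (\ref{bif7}) gives $\frac{d|\lambda_{1,2}|}{d\theta_*}\big|_{\theta_*=0}=-\frac{u^*(1-u^*)(2+3cu^*)}{2(1+cu^*)}<0$ because $0<u^*<1$, so the eigenvalues leave the unit circle with nonzero speed. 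For (c), write $\lambda_{1,2}(0)=e^{\pm i\phi}$; then $\cos\phi=\frac{a(0)}{2}=\frac{1+K}{2}\in(\frac{1}{2},1)$, so $\phi\in(0,\pi/3)$, and since the only angles $\psi\in[0,\pi]$ admitting $e^{im\psi}=1$ for some $m\in\{1,2,3,4\}$ are $0,\pi/2,2\pi/3,\pi$, none of which lies in $(0,\pi/3)$, condition (\ref{nondeg}) follows.

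It remains to bring in $L$. The third and fourth steps diagonalize the linear part: the matrix $T$ is invertible, $\det T=-2u^*(1+cu^*)\sqrt{-D}\neq0$ since $u^*>0$, $1+cu^*>0$ and $D<0$, and the change of coordinates $(x,y)^T=T(X,Y)^T$ carries the cubic Taylor expansion (\ref{bif8}) into the normal-form-ready system (\ref{bif9}), whose nonlinear parts $F,G$ have the listed partial derivatives at the origin; substituting these into $L_{20},L_{11},L_{02},L_{21}$ and then into (\ref{lya}) yields the explicit $L=L(r,c,\beta,\theta_0)$ recorded above. With $L\neq0$, the Neimark-Sacker bifurcation theorem applies to (\ref{bif9}), hence to (\ref{bif4}) and to (\ref{discr}): a unique closed invariant curve bifurcates from $E_2(u^*,v^*)$ for $\theta_*$ near $0$, and combining the sign of $L$ with the negative transversality constant in (\ref{bif7}) one reads off that the curve is attracting and exists for $\theta_*>0$ when $L<0$, and is repelling and exists for $\theta_*<0$ when $L>0$, which is the assertion of the theorem.

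The logically nontrivial part of all this is small: it is (a)--(c), and those are already in place. The real work, and the only genuine obstacle, is the algebraic bookkeeping leading to $L$: propagating the third-order Taylor data of (\ref{bif8}) through $T^{-1}$ to obtain $F$ and $G$ in (\ref{bif9}), extracting $F_{XX},\dots,G_{YYY}$ at the origin without sign slips, and assembling $L_{20},L_{11},L_{02},L_{21}$ and $L$ correctly, so that in a concrete example the sign of $L$, and with it the super- or subcritical character of the bifurcation, can actually be decided.
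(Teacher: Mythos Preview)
Your proposal is correct and follows the same route as the paper: the paper's argument for Theorem~\ref{bifurcation} is exactly the five preparatory steps of Section~3 (perturb, recenter, verify the transversality condition (\ref{bif7}) and the nondegeneracy condition (\ref{nondeg}), pass to normal form via $T$, compute $L$ from (\ref{lya})), after which the general Neimark--Sacker theorem from \cite{Guc,Kuz,Rob,Wing} is invoked. Your write-up adds a slightly more explicit justification of (\ref{nondeg}) by locating the argument $\phi$ of $\lambda_{1,2}(0)$ in $(0,\pi/3)$, but otherwise the structure and content coincide.
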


\section{Numerical simulations}

The following example illustrates the above Theorem \ref{bifurcation}:

\textbf{Example 1}.  Let us consider the system (\ref{discr}) with parameters $c=1, \beta=4, r=\frac{10}{9}, \theta=\theta_0=\frac{4}{9}.$ Then the fixed point $E_2=(0.5,0.75)$ with the multipliers $\lambda_1=\frac{5-i\sqrt{11}}{6}$ and  $\lambda_2=\frac{5+i\sqrt{11}}{6}.$ Moreover, $|\lambda_{1,2}|=1, \frac{d|\lambda_{1,2}|}{d\theta_*}\Bigm|_{\theta_*=0}=-\frac{7}{24}<0$ and
$$L_{20}=-\frac{17}{36\sqrt{11}}-\frac{5}{36}i, \ \ L_{11}=-\frac{5}{18\sqrt{11}}-\frac{i}{2}, $$
$$L_{02}=\frac{27}{36\sqrt{11}}+\frac{23}{36}i, \ \ L_{21}=-\frac{17}{108}+\frac{159}{162\sqrt{11}}i,$$
and
$$L\approx-0,8286<0.$$
Thus, according to Theorem \ref{bifurcation}, an attracting invariant closed curve bifurcates
from the fixed point for $\theta_*>0.$

For this example, Figures \ref{outside} (a)-d) show that the closed curve is stable outside, while Figures \ref{inside}
 (a)-(d) indicate that the closed curve is stable inside for the repelling fixed point
$E_2$ as long as the assumptions of Theorem \ref{bifurcation} hold.

\begin{figure}[h!]
    \centering
    \subfigure[\tiny$\theta=0.45, n=800$]{\includegraphics[width=0.4\textwidth]{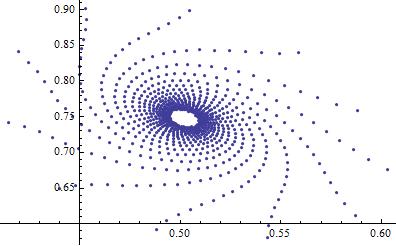}}
    \subfigure[\tiny$\theta=0.45, n=10000$]{\includegraphics[width=0.4\textwidth]{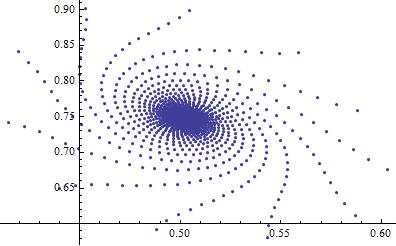}}
    \subfigure[\tiny$\theta=0.444, n=10000$]{\includegraphics[width=0.4\textwidth]{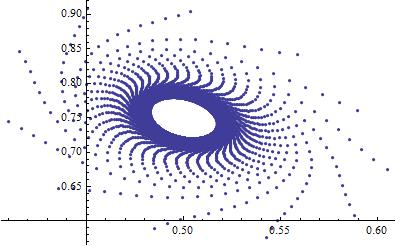}}
    \subfigure[\tiny$\theta=0.44, n=10000$]{\includegraphics[width=0.4\textwidth]{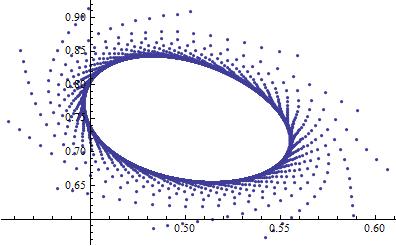}}
    \caption{Phase portraits for the system (\ref{discr}) with $c=1, \beta=4, r=10/9, \theta_0=4/9\approx0.4444..., (u^0,v^0)=(0.6, 0.75).$}
    \label{outside}
\end{figure}

\begin{figure}[h!]
    \centering
    \subfigure[\tiny $\theta=0.44, (u^0,v^0)=(0.48, 0.74), n=1000$]{\includegraphics[width=0.38\textwidth]{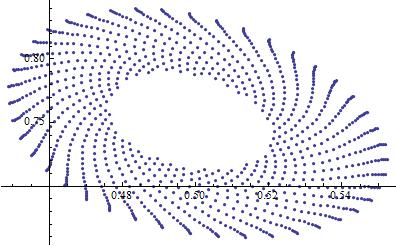}}
    \subfigure[\tiny $\theta=0.44, (u^0,v^0)=(0.48, 0.74), n=10000$]{\includegraphics[width=0.38\textwidth]{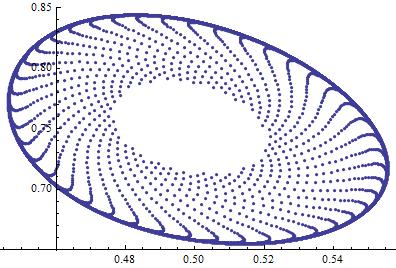}}
    \subfigure[\tiny $\theta=0.44, (u^0,v^0)=(0.45, 0.76), n=10000$]{\includegraphics[width=0.38\textwidth]{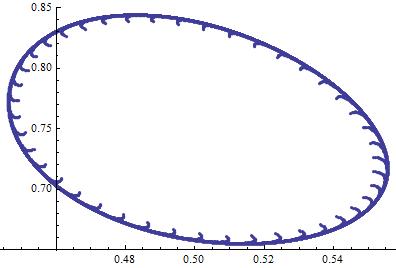}}
    \subfigure[\tiny $\theta=0.4, (u^0,v^0)=(0.48, 0.74), n=10000$]{\includegraphics[width=0.38\textwidth]{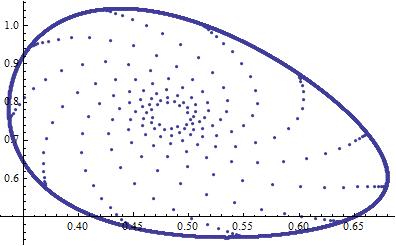}}
    \caption{Phase portraits for the system (\ref{discr}) with $c=1, \beta=4, r=10/9, \theta_0=4/9\approx0.4444....$}
    \label{inside}
\end{figure}

In addition, in the figures (a) and  (b) of the Figure \ref{outside}, the fixed point $E_2$ is an attractive fixed point because $\theta>\theta_0$ and for the other figures $E_2$ is a repelling fixed point.

\section{Discussion}

In this paper, we investigated the phytoplankton-zooplankton discrete-time model with Holling type II predator functional response. We defined type of fixed points  $E_{0}=(0,0),$ $E_{1}=(1,0)$ and found conditions for parameters that positive fixed points  $E_{2}=(u^*,v^*),$ $E_{3}=(u^{**},v^{**})$ and $E_{4}=(\overline{u},\overline{v})$ exist, here the sufficient conditions are $\beta>r+\theta$ and $cr\leq(\sqrt{\beta}-\sqrt{\theta})^2$ . In addition, we studied local stability of the fixed points $E_{2}, E_{3}, $ and $E_{4}.$  Moreover, by choosing bifurcation parameter $\theta,$ we obtained the sufficient conditions for Neimark-Sacker bifurcation to occur. By $\theta_0$ we denoted the value of $\theta$ which for $q(u^*)=1.$ Then by Lemma \ref{lem2}, $E_{2}$ is an attractive if $q(u^*)<1$ and repelling when $q(u^*)>1.$ Thus, it has been shown to be a Neimark-Sacker bifurcation is that the system (\ref{discr}) undergoes a bifurcation when the parameter $\theta$ passes through the value $\theta_0.$ Finally, we have given an example with numerical simulation illustrating our results and an attracting invariant closed curve bifurcates from the fixed point $E_{2}.$ One aspect of our future work is focused to study the global dynamics of the nonlinear model (\ref{discr}).


\end{document}